 \newtheoremstyle{mytheorem}
 {3pt}
 {3pt}
 {\slshape}
 {}
 {\bfseries}
 {.}
 { }
 {}
\numberwithin{equation}{section}
\theoremstyle{theorem}
\newtheorem{theorem}{Theorem}[section]
\theoremstyle{definition}
\newtheorem{example}{Example}[section]
\newcommand{\Keywords}[1]{\ifthenelse{\isempty{#1}}{}{\smallskip \smallskip \noindent \textbf{Keywords}. #1}}
\newcommand{\MSC}[2][2010]{\ifthenelse{\isempty{#2}}{}{\smallskip \smallskip \noindent \textbf{#1MSC}. #2}}
\newcommand{\abstractnote}[1]{\ifthenelse{\isempty{#1}}{}{\smallskip \smallskip \noindent \textsuperscript{\dag}#1}}
\def\specialsection{\@startsection{section}{1}%
  \z@{\linespacing\@plus\linespacing}{.5\linespacing}%
  {\normalfont}}
\def\section{\@startsection{section}{1}%
  \z@{.7\linespacing\@plus\linespacing}{.5\linespacing}%
  {\normalfont\scshape}}
\patchcmd{\@settitle}{\uppercasenonmath\@title}{\Large\boldmath}{}{}
\patchcmd{\@settitle}{\begin{center}}{\begin{flushleft}}{}{}
\patchcmd{\@settitle}{\end{center}}{\end{flushleft}}{}{}
\patchcmd{\@setauthors}{\MakeUppercase}{\normalsize}{}{}
\patchcmd{\@setauthors}{\centering}{\raggedright}{}{}
\patchcmd{\section}{\scshape}{\large\bfseries\boldmath}{}{}
\patchcmd{\subsection}{\bfseries}{\bfseries\boldmath}{}{}
\renewcommand{\@secnumfont}{\bfseries}
\patchcmd{\@startsection}{\@afterindenttrue}{\@afterindentfalse}{}{}
\patchcmd{\abstract}{\leftmargin3pc}{\leftmargin1pc}{}{}
\def\maketitle{\par
  \@topnum\z@ 
  \@setcopyright
  \thispagestyle{empty}
  \ifx\@empty\shortauthors \let\shortauthors\shorttitle
  \else \andify\shortauthors
  \fi
  \@maketitle@hook
  \begingroup
  \@maketitle
  \toks@\@xp{\shortauthors}\@temptokena\@xp{\shorttitle}%
  \toks4{\def\\{ \ignorespaces}}
  \edef\@tempa{%
    \@nx\markboth{\the\toks4
      \@nx\MakeUppercase{\the\toks@}}{\the\@temptokena}}%
  \@tempa
  \endgroup
  \c@footnote\z@
  \@cleartopmattertags
}
\title{New congruences for $\ell$-regular overpartitions}
\author[S. Chern]{Shane Chern}
\address{Department of Mathematics, Pennsylvania State University, University Park, PA 16802, USA}
\email{shanechern@psu.edu}
\date{}
\begin{document}

{\footnotesize\noindent \textit{Integers} \textbf{17} (2017), Paper No. A22, 8 pp.}

\bigskip \bigskip

\maketitle

\begin{abstract}

Recently, Shen (2016) and Alanazi et al. (2016) studied the arithmetic properties of the $\ell$-regular overpartition function $\overline{A}_\ell (n)$, which counts the number of overpartitions of $n$ into parts not divisible by $\ell$. In this note, we will present some new congruences modulo $5$ when $\ell$ is a power of $5$.

\Keywords{Congruence, overpartition, $\ell$-regular partition.}

\MSC{Primary 05A17; Secondary 11P83.}
\end{abstract}

\section{Introduction}

A \textit{partition} of a natural number $n$ is a nonincreasing sequence of positive integers whose sum is $n$. For example, $6=3+2+1$ is a partition of $6$. Let $p(n)$ denote the number of partitions of $n$. We also agree that $p(0)=1$. It is well-known that the generating function of $p(n)$ is given by
$$\sum_{n\ge 0}p(n)q^n=\frac{1}{(q;q)_\infty},$$
where we adopt the standard notation
$$(a;q)_\infty=\prod_{n\ge 0}(1-aq^n).$$

For any positive integer $\ell$, a partition is called \textit{$\ell$-regular} if none of its parts are divisible by $\ell$. Let $b_\ell(n)$ denote the number of $\ell$-regular partitions of $n$. We know that its generating function is
$$\sum_{n\ge 0}b_\ell(n)q^n=\frac{(q^\ell;q^\ell)_\infty}{(q;q)_\infty}.$$
On the other hand, an \textit{overpartition} of $n$ is a partition of $n$ in which the first occurrence of each distinct part can be overlined. Let $\overline{p}(n)$ be the number of overpartitions of $n$. We also know that the generating function of $\overline{p}(n)$ is
$$\sum_{n\ge 0}\overline{p}(n)q^n=\frac{(-q;q)_\infty}{(q;q)_\infty}=\frac{(q^2;q^2)_\infty}{(q;q)_\infty^2}.$$
Many authors have studied the arithmetic properties of $b_\ell(n)$ and $\overline{p}(n)$. We refer the interested readers to the ``Introduction'' part of \cite{She2016} and references therein for detailed description.

In \cite{Lov2003}, Lovejoy introduced a function $\overline{A}_\ell(n)$, which counts the number of overpartitions of $n$ into parts not divisible by $\ell$. According to Shen \cite{She2016}, this type of partition can be named as \textit{$\ell$-regular overpartition}. He also obtained the generating function of $\overline{A}_\ell(n)$, that is,
\begin{equation}\label{eq:gf}
\sum_{n\ge 0}\overline{A}_\ell(n)q^n=\frac{(q^\ell;q^\ell)_\infty^2(q^2;q^2)_\infty}{(q;q)_\infty^2(q^{2\ell};q^{2\ell})_\infty}.
\end{equation}
Meanwhile, he presented several congruences for $\overline{A}_3(n)$ and $\overline{A}_4(n)$. For $\overline{A}_3(n)$, he got
\begin{align*}
\overline{A}_3(4n+1)&\equiv 0 \pmod{2},\\
\overline{A}_3(4n+3)&\equiv 0 \pmod{6},\\
\overline{A}_3(9n+3)&\equiv 0 \pmod{6},\\
\overline{A}_3(9n+6)&\equiv 0 \pmod{24}.
\end{align*}
More recently, Alanazi et al. \cite{AMS2016} further studied the arithmetic properties of $\overline{A}_{\ell} (n)$ under modulus $3$ when $\ell$ is a power of $3$. They also gave some congruences satisfied by $\overline{A}_{\ell} (n)$ modulo $2$ and $4$.

In this note, our main purpose is to study the arithmetic properties of $\overline{A}_\ell(n)$ when $\ell$ is a power of $5$. When $\ell=5$ and $25$, we connect $\overline{A}_\ell (n)$ with $r_4(n)$ and $r_8(n)$ respectively, where $r_k(n)$ denotes the number of representations of $n$ by $k$ squares. The method for $\ell=5$ also applies to other prime $\ell$. When $\ell=125$, we show that
$$\overline{A}_{125}(25n)\equiv \overline{A}_{125}(625n)\pmod{5}.$$
This can be regarded as an analogous result of the following congruence for overpartition function $\overline{p}(n)$ proved by Chen et al. (see \cite[Theorem 1.5]{Che2015})
$$\overline{p}(25n)\equiv \overline{p}(625n)\pmod{5}.$$
When $\ell=5^\alpha$ with $\alpha\ge 4$, we obtain new congruences similar to a result of Alanazi et al. (see \cite[Theorem 3]{AMS2016}), which states that $\overline{A}_{3^\alpha}(27n+18)\equiv 0\pmod{3}$ holds for all $n\ge 0$ and $\alpha\ge 3$.

\section{New congruence results}

\subsection{$\ell=5$}

One readily sees from the binomial theorem that for any prime $p$,
\begin{equation}\label{eq:qp}
(q;q)_\infty^p\equiv (q^p;q^p)_\infty \pmod{p}.
\end{equation}
Setting $p=5$ and applying it to \eqref{eq:gf}, we have
\begin{equation}\label{eq:gf5}
\sum_{n\ge 0}\overline{A}_5(n)q^n\equiv \left(\frac{(q;q)_\infty^2}{(q^2;q^2)_\infty}\right)^{4} \pmod{5}.
\end{equation}

Now let
$$\varphi(q):=\sum_{n=-\infty}^{\infty}q^{n^2}.$$
It is well-known that (see \cite[p. 37, Eq. (22.4)]{Ber1991})
$$\varphi(-q)=\frac{(q;q)_\infty^2}{(q^2;q^2)_\infty}.$$
We therefore have

\begin{theorem}\label{th:l5}
For any positive integer $n$, we have
\begin{equation}
\overline{A}_5(n)\equiv\begin{cases}
r_{4}(n) & \text{if $n$ is even}\\
-r_{4}(n) & \text{if $n$ is odd}
\end{cases} \pmod{5}.
\end{equation}
\end{theorem}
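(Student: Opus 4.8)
The plan is to identify the right-hand side of \eqref{eq:gf5} with a signed four-squares generating function. Starting from \eqref{eq:gf5} together with the stated evaluation $\varphi(-q)=(q;q)_\infty^2/(q^2;q^2)_\infty$, I would first rewrite the congruence as
$$\sum_{n\ge 0}\overline{A}_5(n)q^n\equiv \varphi(-q)^4\pmod{5}.$$
Everything then reduces to extracting the coefficient of $q^n$ in $\varphi(-q)^4$ and comparing it with $r_4(n)$.

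Next I would expand $\varphi(-q)$ directly from its definition. Since $\varphi(-q)=\sum_{m=-\infty}^{\infty}(-q)^{m^2}=\sum_{m=-\infty}^{\infty}(-1)^{m^2}q^{m^2}$, and since $m^2\equiv m\pmod{2}$ forces $(-1)^{m^2}=(-1)^m$, I obtain $\varphi(-q)=\sum_{m=-\infty}^{\infty}(-1)^m q^{m^2}$. Raising to the fourth power and multiplying out,
$$\varphi(-q)^4=\sum_{m_1,m_2,m_3,m_4\in\mathbb{Z}}(-1)^{m_1+m_2+m_3+m_4}\,q^{m_1^2+m_2^2+m_3^2+m_4^2},$$
so the coefficient of $q^n$ equals $\sum(-1)^{m_1+m_2+m_3+m_4}$, the sum being taken over all integer solutions of $m_1^2+m_2^2+m_3^2+m_4^2=n$.

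The crux of the argument — and really the only substantive step — is the observation that the sign is \emph{constant} across the entire set of representations of a fixed $n$. Applying $m_i^2\equiv m_i\pmod{2}$ once more gives $m_1+m_2+m_3+m_4\equiv m_1^2+m_2^2+m_3^2+m_4^2=n\pmod{2}$ for every such solution, so $(-1)^{m_1+m_2+m_3+m_4}=(-1)^n$ independently of the particular representation. The signed sum therefore factors as $(-1)^n$ times the plain number of representations of $n$ by four squares, namely $(-1)^n r_4(n)$. Comparing coefficients yields $\overline{A}_5(n)\equiv(-1)^n r_4(n)\pmod{5}$, and splitting into the even and odd cases reproduces the two branches of the claimed congruence. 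I anticipate no genuine obstacle; the single point demanding care is the justification that the parity of $m_1+\cdots+m_4$ is pinned down by $n$ alone, since this is precisely what permits the sign to be pulled outside the sum.
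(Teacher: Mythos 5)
Your proposal is correct and follows the same route as the paper: reduce \eqref{eq:gf5} to $\varphi(-q)^4$ and read off the coefficient of $q^n$ as $(-1)^n r_4(n)$. The paper leaves the coefficient extraction implicit, whereas you supply the (correct) parity argument $m_1+\cdots+m_4\equiv m_1^2+\cdots+m_4^2=n\pmod 2$ that pins the sign to $(-1)^n$; this is exactly the intended justification.
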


We know from \cite[Theorem 3.3.1]{Ber2006} that $r_4(n)=8d^*(n)$ where
$$d^*(n)=\sum_{d\mid n,\, 4\nmid d}d.$$
Let $p\ne 5$ be an odd prime and $k$ be a nonnegative integer. One readily verifies that
$$\sum_{i=0}^{4k+3}p^i\equiv (k+1)\sum_{i=1}^4 i\equiv 0 \pmod{5}.$$
Note also that $d^*(n)$ is multiplicative. Thus we conclude
\begin{theorem}
Let $p\ne 5$ be an odd prime and $k$ be a nonnegative integer. Let $n$ be a nonnegative integer. We have
\begin{equation}
\overline{A}_5(p^{4k+3}(pn+i))\equiv 0 \pmod{5},
\end{equation}
where $i\in\{1,2,\ldots,p-1\}$.
\end{theorem}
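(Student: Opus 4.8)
The plan is to reduce the entire statement to an elementary divisibility fact about $d^*$ and then quote the arithmetic already set up above. Write $N := p^{4k+3}(pn+i)$, a fixed positive integer. By Theorem \ref{th:l5} we have $\overline{A}_5(N)\equiv \pm r_4(N)\pmod 5$, the sign depending only on the parity of $N$; since that sign is a unit modulo $5$, proving $\overline{A}_5(N)\equiv 0$ is the same as proving $r_4(N)\equiv 0\pmod 5$. Using $r_4(N)=8d^*(N)$ together with the fact that $8\equiv 3\pmod 5$ is invertible, this is in turn equivalent to
\[
d^*(N)\equiv 0 \pmod 5 .
\]
So the parity bookkeeping and the representation numbers disappear, and everything comes down to showing that $d^*$ vanishes modulo $5$ on these particular arguments.

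First I would exploit the coprimality built into the hypothesis. Because $i\in\{1,\dots,p-1\}$ we have $p\nmid(pn+i)$, hence $\gcd\bigl(p^{4k+3},\,pn+i\bigr)=1$, and the multiplicativity of $d^*$ gives $d^*(N)=d^*(p^{4k+3})\,d^*(pn+i)$. Thus it suffices to prove the single factor $d^*(p^{4k+3})\equiv 0\pmod 5$, and the congruence for $d^*(N)$ then holds for every $n$ at once. The divisors of $p^{4k+3}$ are $1,p,\dots,p^{4k+3}$; this is precisely where the hypothesis that $p$ is \emph{odd} enters, since then each divisor is odd and the restriction $4\nmid d$ in the definition of $d^*$ discards nothing. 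Consequently
\[
d^*(p^{4k+3})=\sum_{j=0}^{4k+3}p^{\,j}.
\]

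The hard part — indeed the only step that is not purely formal — is to see that this geometric sum vanishes modulo $5$, which is exactly the congruence recorded just before the statement. This is where the hypothesis $p\ne 5$ is used: Fermat's little theorem gives $p^4\equiv 1\pmod 5$, so that
\[
\sum_{j=0}^{4k+3}p^{\,j}=\bigl(1+p+p^2+p^3\bigr)\sum_{b=0}^{k}p^{4b}\equiv (k+1)\bigl(1+p+p^2+p^3\bigr)\pmod 5,
\]
and the length-$4$ block $1+p+p^2+p^3$ is congruent to $\sum_{i=1}^4 i=10\equiv 0\pmod 5$ by the computation preceding the statement. Hence $d^*(p^{4k+3})\equiv (k+1)\cdot 0\equiv 0$, so $d^*(N)\equiv 0\pmod 5$, and feeding this back through $r_4(N)=8d^*(N)$ and Theorem \ref{th:l5} gives $\overline{A}_5(N)\equiv 0\pmod 5$. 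The whole difficulty is therefore concentrated in the block-sum evaluation: once one sees that the $4(k+1)$ terms split evenly into $k+1$ complete periods of $(p^{\,j}\bmod 5)$, the vanishing is immediate and every surrounding step is mere bookkeeping.
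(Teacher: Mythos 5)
Your write-up is a faithful expansion of the paper's own argument, which consists of exactly the two sentences preceding the theorem: the formula $r_4(n)=8d^*(n)$, the asserted congruence $\sum_{i=0}^{4k+3}p^i\equiv(k+1)\sum_{i=1}^4 i\equiv 0\pmod 5$, and the multiplicativity of $d^*$. Your bookkeeping around these — the sign from Theorem \ref{th:l5} being a unit, the invertibility of $8$, the coprimality $\gcd(p^{4k+3},pn+i)=1$, and the observation that oddness of $p$ makes the condition $4\nmid d$ vacuous on divisors of $p^{4k+3}$ — is all correct and matches what the paper leaves implicit.

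However, there is one step that genuinely fails, and it is the very step you (and the paper) lean on: the claim that the block $1+p+p^2+p^3$ is congruent to $\sum_{i=1}^4 i\equiv 0\pmod 5$. Identifying $\{1,p,p^2,p^3\}$ with $\{1,2,3,4\}$ modulo $5$ requires $p$ to have order $4$ modulo $5$. Writing $1+p+p^2+p^3=(1+p)(1+p^2)$, the vanishing modulo $5$ does survive for $p\equiv 2,3,4\pmod 5$, but for $p\equiv 1\pmod 5$ one gets $1+p+p^2+p^3\equiv 4\pmod 5$, so your "$k+1$ complete periods" picture breaks down (the period is then $1$, not $4$). Concretely, for $p=11$ and $k=0$ one has $d^*(11^3)=1+11+121+1331=1464\equiv 4\pmod 5$, hence by Theorem \ref{th:l5} $\overline{A}_5(1331)\equiv -8\cdot 1464\equiv 3\pmod 5$, so the theorem as stated is false in that case (for $p\equiv 1\pmod 5$ it holds only when $5\mid k+1$). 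This is a flaw you inherited from the paper rather than introduced, but it is precisely the one non-formal step of the argument, and repairing it requires adding the hypothesis $p\not\equiv 1\pmod 5$.
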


\begin{example}
If we take $p=3$, $k=0$, and $i=1$, then
\begin{equation}
\overline{A}_5(81n+27)\equiv 0 \pmod{5}
\end{equation}
holds for all $n\ge 0$.
\end{example}

We also note that if an odd prime $p$ is congruent to $9$ modulo $10$, then $1+p\equiv 0 \pmod{5}$. We therefore have $\overline{A}_5(p(pn+i))\equiv 0 \pmod{5}$ for $i\in\{1,2,\ldots,p-1\}$. On the other hand, if we require $1+p+p^2 \equiv 0 \pmod{5}$, then $(2p+1)^2\equiv -3 \pmod{5}$. However, since $(-3|5)=-1$ (here $(*|*)$ denotes the Legendre symbol), such $p$ does not exist. The above observation yields

\begin{theorem}
Let $p\equiv 9 \pmod{10}$ be a prime and $n$ be a nonnegative integer. We have
\begin{equation}
\overline{A}_5(p(pn+i))\equiv 0 \pmod{5},
\end{equation}
where $i\in\{1,2,\ldots,p-1\}$.
\end{theorem}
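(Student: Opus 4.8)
The plan is to transfer the problem from $\overline{A}_5$ to the arithmetic function $d^*$, where the hypothesis $p\equiv 9\pmod{10}$ has a transparent effect. By Theorem~\ref{th:l5} we have $\overline{A}_5(N)\equiv\pm r_4(N)\pmod 5$ for every $N$, the sign depending only on the parity of $N$; since the target is the zero residue, the sign is irrelevant, and it suffices to prove that $r_4\bigl(p(pn+i)\bigr)\equiv 0\pmod 5$. Using $r_4(N)=8d^*(N)$ together with $5\nmid 8$, this reduces further to showing $d^*\bigl(p(pn+i)\bigr)\equiv 0\pmod 5$.

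First I would record the key coprimality. Write $N=p(pn+i)$ and set $m=pn+i$. Because $i\in\{1,2,\ldots,p-1\}$ we have $p\nmid i$, hence $m\equiv i\not\equiv 0\pmod p$, so $\gcd(p,m)=1$. The multiplicativity of $d^*$ (recalled just before the theorem) then gives $d^*(N)=d^*(p)\,d^*(m)$, so it is enough to control the single local factor $d^*(p)$.

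Next I would compute $d^*(p)$. Since $p\equiv 9\pmod{10}$ the prime $p$ is odd, so its only divisors are $1$ and $p$, neither divisible by $4$; therefore $d^*(p)=1+p$. The hypothesis now enters directly: $p\equiv 9\pmod{10}$ forces $p\equiv 4\pmod 5$, whence $1+p\equiv 0\pmod 5$. Thus $d^*(p)\equiv 0\pmod 5$, and consequently $d^*(N)=d^*(p)\,d^*(m)\equiv 0\pmod 5$. Tracing back through $r_4(N)=8d^*(N)$ and Theorem~\ref{th:l5} yields $\overline{A}_5(N)\equiv 0\pmod 5$, as claimed.

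The argument is essentially mechanical, so there is no genuine analytic obstacle; the only place requiring care is the bookkeeping around multiplicativity, namely verifying $\gcd(p,m)=1$ so that $d^*$ splits as a product and the single factor $d^*(p)=1+p$ carries the vanishing modulo $5$. The parity caveat in Theorem~\ref{th:l5} is harmless here precisely because we are aiming for the zero residue class.
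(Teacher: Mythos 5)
Your proposal is correct and follows exactly the paper's route: reduce via Theorem~\ref{th:l5} and $r_4(n)=8d^*(n)$ to the multiplicative function $d^*$, observe that $\gcd(p,pn+i)=1$, and use $d^*(p)=1+p\equiv 0\pmod 5$ when $p\equiv 9\pmod{10}$. The paper states this more tersely, but the argument is the same; your explicit check of the coprimality needed for multiplicativity is a welcome bit of added care.
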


\begin{example}
If we take $p=19$ and $i=1$, then
\begin{equation}
\overline{A}_5(361n+19)\equiv 0 \pmod{5}
\end{equation}
holds for all $n\ge 0$.
\end{example}

We should mention that this method also applies to other primes $\ell$. In fact, if we set $p=\ell$ in \eqref{eq:qp} and apply it to \eqref{eq:gf}, then
\begin{equation}
\overline{A}_\ell(n)\equiv\begin{cases}
r_{\ell-1}(n) & \text{if $n$ is even}\\
-r_{\ell-1}(n) & \text{if $n$ is odd}
\end{cases} \pmod{\ell}.
\end{equation}
Recall that the explicit formulas of $r_2(n)$ and $r_6(n)$ are also known. From \cite[Theorems 3.2.1 and 3.4.1]{Ber2006}, we have
$$r_2(n)=4\sum_{d\mid n}\chi(d),$$
and
$$r_6(n)=16\sum_{d\mid n}\chi(n/d)d^2-4\sum_{d\mid n}\chi(d)d^2,$$
where
$$\chi(n)=\begin{cases}
1 & n\equiv 1 \pmod{4},\\
-1 & n\equiv 3 \pmod{4},\\
0 & \text{otherwise.}
\end{cases}$$
Through a similar argument, we conclude that

\begin{theorem}
For any nonnegative integers $n$, $k$, odd prime $p$, and $i\in\{1,2,\ldots,p-1\}$, we have
\begin{align*}
\overline{A}_3(p^{2k+1}(pn+i))&\equiv 0 \pmod{3}\quad\text{where $p\equiv 3 \pmod{4}$},\\
\overline{A}_3(p^{3k+2}(pn+i))&\equiv 0 \pmod{3}\quad\text{where $p\equiv 1 \pmod{4}$},\\
\overline{A}_7(p^{6k+5}(pn+i))&\equiv 0 \pmod{7}\quad\text{where $p\ne 7$}.
\end{align*}
\end{theorem}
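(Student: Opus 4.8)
The plan is to treat all three congruences uniformly by first invoking the general reduction $\overline{A}_\ell(n)\equiv \pm r_{\ell-1}(n)\pmod{\ell}$ recorded above, with $\ell=3$ and $\ell=7$. Since the sign $\pm 1$ is a unit modulo $\ell$, proving $\overline{A}_\ell(N)\equiv 0\pmod{\ell}$ is equivalent to proving $r_{\ell-1}(N)\equiv 0\pmod{\ell}$. I would then substitute the explicit divisor-sum formulas for $r_2$ and $r_6$ and reduce their coefficients modulo $\ell$ (so $4\equiv 1\pmod 3$, and $16\equiv 2$, $4\equiv 4\pmod 7$). In each case the arithmetic function attached to $r_{\ell-1}$ is assembled from the completely multiplicative character $\chi$, possibly twisted by a power of the identity, and is therefore multiplicative; this is the structural feature that drives the whole scheme, exactly as $d^*(n)$ did in the $\overline{A}_5$ argument.

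For the two $\overline{A}_3$ statements, set $\delta(n)=\sum_{d\mid n}\chi(d)$, so that $r_2(n)=4\delta(n)\equiv\delta(n)\pmod 3$. Writing $N=p^a(pn+i)$ with $\gcd(pn+i,p)=1$ and using multiplicativity, it suffices to show $\delta(p^a)\equiv 0\pmod 3$ for the relevant exponent $a$. Here $\delta(p^a)=\sum_{j=0}^{a}\chi(p)^j$ is a geometric sum with ratio $\chi(p)=\pm 1$. When $p\equiv 3\pmod 4$ the ratio is $-1$ and $a=2k+1$ produces $2(k+1)$ terms, which cancel in pairs to give exactly $0$. When $p\equiv 1\pmod 4$ the ratio is $1$ and $a=3k+2$ produces $3(k+1)$ terms, so $\delta(p^a)=3(k+1)\equiv 0\pmod 3$. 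Both cases are then immediate and hold unconditionally.

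For $\overline{A}_7$ I would introduce the two twisted divisor sums $f(n)=\sum_{d\mid n}\chi(n/d)d^2$ and $g(n)=\sum_{d\mid n}\chi(d)d^2$, both multiplicative, so that $r_6(n)\equiv 2f(n)-4g(n)\pmod 7$. The key computation is the prime-power identity $f(p^a)=\chi(p)^a\,g(p^a)$, obtained by reindexing the geometric sum and using $\chi(p)^{-1}=\chi(p)$ for the odd prime $p$. Factoring $N=p^{6k+5}(pn+i)$ and collecting terms, the whole expression factors out $g(p^{6k+5})$, so the problem reduces to the single requirement $g(p^{6k+5})\equiv 0\pmod 7$, where $g(p^{6k+5})=\sum_{j=0}^{6k+5}\zeta^j$ with $\zeta:=\chi(p)p^2$. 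By Fermat's little theorem $\zeta^6=\chi(p)^6(p^6)^2\equiv 1\pmod 7$, so $\zeta$ is a root of unity whose order divides $6$; the sum runs over $6(k+1)$ terms, a multiple of that order, and hence vanishes whenever $\zeta\not\equiv 1$.

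The step I expect to be the main obstacle is exactly the degenerate case $\zeta\equiv 1\pmod 7$, in which the geometric sum collapses to its term count $6(k+1)$, not divisible by $7$ in general. This arises precisely when $\chi(p)p^2\equiv 1\pmod 7$: it is vacuous for $p\equiv 3\pmod 4$ (there $\zeta\equiv -p^2$, and $-1$ is a nonresidue mod $7$), but for $p\equiv 1\pmod 4$ it occurs whenever $p\equiv\pm 1\pmod 7$. I would therefore expect the clean conclusion to need the extra hypothesis $\zeta\ne 1$, i.e. $p\not\equiv\pm 1\pmod 7$ in the $p\equiv 1\pmod 4$ subcase, and I would flag this restriction explicitly. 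The contrast with $\overline{A}_3$ is instructive: there the analogous ratio-one case survives only because the term count $3(k+1)$ is automatically divisible by the modulus $3$, a numerical coincidence with no counterpart modulo $7$.
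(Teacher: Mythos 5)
Your reduction to $r_2$ and $r_6$, the appeal to multiplicativity, and the evaluation of the resulting geometric sums at prime powers is exactly the ``similar argument'' the paper has in mind (it supplies no further detail), and your treatment of the two $\overline{A}_3$ congruences is complete and correct: $\delta(p^{2k+1})=0$ when $\chi(p)=-1$, and $\delta(p^{3k+2})=3(k+1)\equiv 0\pmod 3$ when $\chi(p)=1$.

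The obstacle you flag in the mod $7$ case is not a defect of your write-up but a genuine error in the theorem as printed. With $\zeta=\chi(p)p^2$ one has $g(p^{6k+5})=\sum_{j=0}^{6k+5}\zeta^j$, and since $f(p^a)=\chi(p)^ag(p^a)$ the quantity $r_6(p^{6k+5}(pn+i))$ carries the factor $g(p^{6k+5})$ against a cofactor $16\chi(p)^{6k+5}f(pn+i)-4g(pn+i)$ that is not generally $0$ modulo $7$ (for $n=0$, $i=1$ it equals $12$ or $-20$, i.e.\ $5$ or $1$ mod $7$). When $\zeta\equiv 1\pmod 7$ the geometric sum collapses to $6(k+1)\equiv-(k+1)$, which is nonzero for most $k$. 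Concretely, $p=13$ satisfies $p\equiv 1\pmod 4$ and $13^2\equiv 1\pmod 7$, so $r_6(13^5)\equiv 6\cdot(16-4)\equiv 2\pmod 7$ and hence $\overline{A}_7(13^5)\equiv -2\not\equiv 0\pmod 7$: the third congruence already fails at $k=0$, $n=0$, $i=1$. Your proposed repair --- excluding $p\equiv\pm 1\pmod 7$ in the subcase $p\equiv 1\pmod 4$, the subcase $p\equiv 3\pmod 4$ being safe because $-1$ is a quadratic nonresidue modulo $7$ --- is the correct fix. It is worth noting that the same oversight occurs in the paper's own model computation for $\ell=5$: the asserted identity $\sum_{i=0}^{4k+3}p^i\equiv(k+1)\sum_{i=1}^{4}i\pmod 5$ fails for $p\equiv 1\pmod 5$ (e.g.\ $d^*(11^3)=1464\equiv 4\pmod 5$), so the ``similar argument'' being referenced cannot be invoked to close the gap.
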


\subsection{$\ell=25$}

Analogous to the congruences under modulus $3$ for $\overline{A}_{9} (n)$ obtained by Alanazi et al. \cite{AMS2016}, we will present some arithmetic properties of $\overline{A}_{25} (n)$ modulo $5$. Rather than using the technique of dissection identities, we build a connection between $\overline{A}_{25} (5n)$ and $r_8(n)$ and then apply the explicit formula of $r_8(n)$. It is necessary to mention that this method also applies to the results of Alanazi et al. as the following relation holds
$$\overline{A}_9(n)\equiv (-1)^n r_8(n) \pmod{3}.$$

Note that
\begin{align*}\sum_{n\ge 0}\overline{A}_{25} (n)q^n&=\frac{(q^{25};q^{25})_\infty^2}{(q^{50};q^{50})_\infty}\frac{(q^2;q^2)_\infty}{(q;q)_\infty^2}\\
&\equiv \left(\frac{(q^{5};q^{5})_\infty^2}{(q^{10};q^{10})_\infty}\right)^5\left(\sum_{n\ge 0}\overline{p}(n)q^n\right) \pmod{5}.
\end{align*}
Extracting powers of the form $q^{5n}$ from both sides and replacing $q^5$ by $q$, we have
\begin{align*}
\sum_{n\ge 0}\overline{A}_{25} (5n)q^n&\equiv \left(\frac{(q;q)_\infty^2}{(q^{2};q^{2})_\infty}\right)^5\left(\sum_{n\ge 0}\overline{p}(5n)q^n\right)\\
&\equiv \varphi(-q)^8 \pmod{5}.
\end{align*}
Here we use the following celebrated result due to Treneer \cite{Tre2006}
$$\sum_{n\ge 0}\overline{p}(5n)q^n\equiv \varphi(-q)^3 \pmod{5}.$$

It is known that the explicit formula of $r_8(n)$ (see \cite[Theorem 3.5.4]{Ber2006}) is given by
$$r_8(n)=16(-1)^n \sigma_3^-(n),$$
where
$$\sigma_3^-(n)=\sum_{d\mid n}(-1)^d d^3.$$
We therefore conclude that

\begin{theorem}\label{th:25}
For any positive integer $n$, we have
\begin{equation}
\overline{A}_{25}(5n)\equiv \sigma_3^-(n) \pmod{5}.
\end{equation}
\end{theorem}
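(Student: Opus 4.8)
The plan is to convert the generating-function congruence
$$\sum_{n\ge 0}\overline{A}_{25}(5n)q^n \equiv \varphi(-q)^8 \pmod{5},$$
already derived in the computation preceding the statement, into a coefficient-wise identity and then match it against the closed form for $r_8(n)$. First I would recall that $\varphi(q)^8 = \sum_{n\ge 0} r_8(n) q^n$ is by definition the generating function for the number of representations as a sum of eight squares, so substituting $q\mapsto -q$ yields
$$\varphi(-q)^8 = \sum_{n\ge 0}(-1)^n r_8(n)\, q^n.$$
Comparing the coefficients of $q^n$ on the two sides then gives $\overline{A}_{25}(5n) \equiv (-1)^n r_8(n) \pmod{5}$ for every nonnegative integer $n$.

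The second step is to insert the explicit evaluation $r_8(n) = 16(-1)^n \sigma_3^-(n)$ recorded immediately before the theorem. The factor $(-1)^n$ produced by the substitution $q\mapsto -q$ cancels against the $(-1)^n$ appearing in Jacobi's formula for $r_8(n)$, so that
$$\overline{A}_{25}(5n) \equiv (-1)^n \cdot 16(-1)^n \sigma_3^-(n) \equiv 16\,\sigma_3^-(n) \pmod{5}.$$
Since $16 \equiv 1 \pmod{5}$, this collapses to the asserted congruence $\overline{A}_{25}(5n) \equiv \sigma_3^-(n) \pmod{5}$.

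The argument amounts to careful bookkeeping once the chain of generating-function reductions in the preamble is in hand, so I do not anticipate a genuine obstacle. The one point deserving attention is the double occurrence of the sign $(-1)^n$ — one copy arising from evaluation at $-q$ and the other from the alternating divisor sum defining $\sigma_3^-$ — and the verification that these two signs cancel rather than reinforce one another. The substantive work, namely the reduction to $\varphi(-q)^8$ through the congruence $(q;q)_\infty^5\equiv (q^5;q^5)_\infty$ and Treneer's identity $\sum_{n\ge 0}\overline{p}(5n)q^n\equiv \varphi(-q)^3 \pmod{5}$, has already been completed before the theorem is stated.
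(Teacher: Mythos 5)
Your proposal is correct and follows essentially the same route as the paper: the text preceding the theorem establishes $\sum_{n\ge 0}\overline{A}_{25}(5n)q^n\equiv\varphi(-q)^8\pmod{5}$ and then invokes $r_8(n)=16(-1)^n\sigma_3^-(n)$, exactly as you do. Your explicit tracking of the two cancelling $(-1)^n$ factors and the reduction $16\equiv 1\pmod 5$ is just the bookkeeping the paper leaves implicit.
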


One also readily deduces several congruences from Theorem \ref{th:25}.

\begin{theorem}
Let $p\ne 5$ be an odd prime and $k$ be a nonnegative integer. Let $n$ be a nonnegative integer. We have
\begin{equation}
\overline{A}_{25}(5p^{4k+3}(pn+i))\equiv 0 \pmod{5},
\end{equation}
where $i\in\{1,2,\ldots,p-1\}$.
\end{theorem}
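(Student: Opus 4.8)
The plan is to reduce the statement to the arithmetic of $\sigma_3^-(n)=\sum_{d\mid n}(-1)^d d^3$ by way of Theorem \ref{th:25}, in direct parallel with the treatment of $\overline{A}_5$ through $d^*(n)$. Writing $N:=p^{4k+3}(pn+i)$ and applying Theorem \ref{th:25} with $n$ replaced by $N$ gives $\overline{A}_{25}(5N)\equiv\sigma_3^-(N)\pmod 5$, so it suffices to prove $\sigma_3^-(N)\equiv 0\pmod 5$.

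The first key step is a factorization of $\sigma_3^-$ that isolates the odd prime power $p^{4k+3}$. Since $1\le i\le p-1$ we have $p\nmid(pn+i)$, so $p^{4k+3}$ and $pn+i$ are coprime, and moreover $p^{4k+3}$ is odd because $p$ is an odd prime. Every divisor $d$ of $N$ then factors uniquely as $d=d_1d_2$ with $d_1\mid p^{4k+3}$ and $d_2\mid(pn+i)$, and since $d_1$ is odd one has $(-1)^{d}=(-1)^{d_1d_2}=(-1)^{d_2}$. Summing over $d_1$ and $d_2$ separates the sum and yields
\begin{equation*}
\sigma_3^-(N)=\Bigl(\sum_{d_1\mid p^{4k+3}}d_1^{\,3}\Bigr)\sigma_3^-(pn+i)=\sigma_3\bigl(p^{4k+3}\bigr)\,\sigma_3^-(pn+i),
\end{equation*}
where $\sigma_3(m)=\sum_{d\mid m}d^3$. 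It then remains to show that the first factor $\sigma_3\bigl(p^{4k+3}\bigr)=\sum_{j=0}^{4k+3}p^{3j}$ vanishes modulo $5$. Here I would run exactly the geometric-sum computation used for $\overline{A}_5$, but with $p$ replaced by $q:=p^3$: grouping the $4(k+1)$ terms into $k+1$ consecutive blocks of four, each block equals $q^{4l}(1+q+q^2+q^3)$, and since $q^4=p^{12}=(p^4)^3\equiv 1\pmod 5$ by Fermat's little theorem, the same reasoning as in the $\ell=5$ case gives $\sum_{j=0}^{4k+3}p^{3j}\equiv(k+1)(1+q+q^2+q^3)\equiv 0\pmod 5$. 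Combining this with the displayed factorization gives $\sigma_3^-(N)\equiv 0\pmod 5$, which is the claim.

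I do not expect a serious obstacle, since the argument is a faithful analogue of the $\overline{A}_5$ theorem, with $d^*$ replaced by $\sigma_3^-$ and the linear divisor sum replaced by the cubic one $\sigma_3$. The one point requiring genuine care is the factorization step, where the sign $(-1)^d$ must be tracked: because $\sigma_3^-$ is \emph{not} multiplicative (indeed $\sigma_3^-(1)=-1$), I cannot simply invoke multiplicativity as was done for $d^*$. Instead the whole step rests on the oddness of $p^{4k+3}$, which guarantees that all the signs are governed solely by the possibly-even factor $pn+i$, so that $p^{4k+3}$ contributes the plain cubic sum $\sigma_3\bigl(p^{4k+3}\bigr)$ and the desired vanishing follows.
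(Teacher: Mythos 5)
Your proposal takes the same route as the paper: apply Theorem \ref{th:25}, factor $\sigma_3^-$ over the coprime pieces $p^{4k+3}$ and $pn+i$, and kill the resulting geometric sum modulo $5$. On the factorization step you are in fact more careful than the paper, which simply asserts that ``$\sigma_3^-(n)$ is multiplicative'' --- false as literally stated, since $\sigma_3^-(1)=-1$ (it is $-\sigma_3^-$ that is multiplicative); your hands-on derivation of $\sigma_3^-(N)=\sigma_3(p^{4k+3})\,\sigma_3^-(pn+i)$ from the oddness of $p^{4k+3}$ is correct and makes the sign bookkeeping explicit.

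The gap is in the last step, and it is shared with the paper's own proof. You need $1+q+q^2+q^3\equiv 0\pmod 5$ for $q=p^3$. Writing $1+q+q^2+q^3=(1+q)(1+q^2)$, this holds exactly when $q\equiv 2,3,4\pmod 5$; since cubing permutes the nonzero residues modulo $5$ and fixes $1$, the condition fails precisely when $p\equiv 1\pmod 5$, in which case each block of four contributes $4$ and $\sigma_3(p^{4k+3})\equiv 4(k+1)\pmod 5$, which is nonzero unless $k\equiv 4\pmod 5$. (The paper's version of this step, $\sum_{i=0}^{4k+3}p^{3i}\equiv (k+1)\sum_{i=1}^{4}i$, tacitly assumes that $1,p^3,p^6,p^9$ run over $1,2,3,4$ modulo $5$, which fails for the same primes.) This is not just a presentational slip: granting Theorem \ref{th:25}, the choice $p=11$, $k=0$, $n=0$, $i=1$ gives $\overline{A}_{25}(5\cdot 11^3)\equiv \sigma_3^-(11^3)=-(1+11^3+11^6+11^9)\equiv -4\equiv 1\pmod 5$. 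So the statement needs the additional hypothesis $p\not\equiv 1\pmod 5$ (or else $k\equiv 4\pmod 5$); under that restriction your argument is complete and matches the paper's.
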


\begin{proof}
It is easy to see that
$$\sigma_3^-(p^{4k+3})=-\sum_{i=0}^{4k+3}p^{3i}\equiv -(k+1)\sum_{i=1}^4 i\equiv 0 \pmod{5}.$$
Note also that $\sigma_3^-(n)$ is multiplicative. The theorem therefore follows.
\end{proof}

Furthermore, if $p\equiv 9 \pmod{10}$ is a prime, then $1+p^3\equiv 0 \pmod{5}$. This yields

\begin{theorem}
Let $p\equiv 9 \pmod{10}$ be a prime and $n$ be a nonnegative integer. We have
\begin{equation}
\overline{A}_{25}(5p(pn+i))\equiv 0 \pmod{5},
\end{equation}
where $i\in\{1,2,\ldots,p-1\}$.
\end{theorem}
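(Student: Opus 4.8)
The plan is to reduce the claim entirely to Theorem~\ref{th:25} together with the multiplicativity of $\sigma_3^-$, mirroring the preceding proof but with the telescoping sum replaced by a single residue computation. First I would apply Theorem~\ref{th:25} with $n$ replaced by $p(pn+i)$ (which is a positive integer since $i\ge 1$), giving
$$\overline{A}_{25}\bigl(5p(pn+i)\bigr)\equiv \sigma_3^-\bigl(p(pn+i)\bigr)\pmod{5}.$$
Because $1\le i\le p-1$ and $p$ is prime, we have $\gcd(p,pn+i)=\gcd(p,i)=1$, so the two factors are coprime. Multiplicativity of $\sigma_3^-$ then yields
$$\sigma_3^-\bigl(p(pn+i)\bigr)=\sigma_3^-(p)\,\sigma_3^-(pn+i),$$
and it suffices to show that the factor $\sigma_3^-(p)$ vanishes modulo $5$.

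Next I would evaluate $\sigma_3^-(p)$ directly from its definition. The only divisors of $p$ are $1$ and $p$, and both are odd since $p\equiv 9\pmod{10}$ is an odd prime, so
$$\sigma_3^-(p)=(-1)^1\cdot 1^3+(-1)^p\cdot p^3=-(1+p^3).$$
The hypothesis $p\equiv 9\pmod{10}$ gives $p\equiv 4\pmod 5$, whence $p^3\equiv 4^3\equiv 4\pmod 5$ and therefore $1+p^3\equiv 0\pmod 5$. Thus $\sigma_3^-(p)\equiv 0\pmod 5$, the product above is divisible by $5$, and the congruence $\overline{A}_{25}\bigl(5p(pn+i)\bigr)\equiv 0\pmod 5$ follows at once.

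Since each step is a short, direct calculation, I do not anticipate a genuine obstacle. The only points requiring a moment's care are confirming the coprimality $\gcd(p,pn+i)=1$ before invoking multiplicativity, and checking that the divisor $p$ is odd so that the sign $(-1)^p$ contributes $-1$ to $\sigma_3^-(p)$. This argument runs exactly parallel to the one for $\overline{A}_{25}\bigl(5p^{4k+3}(pn+i)\bigr)$, with the single condition $1+p^3\equiv 0\pmod 5$ taking the place of the telescoping sum $\sum_{i=0}^{4k+3}p^{3i}\equiv 0\pmod 5$.
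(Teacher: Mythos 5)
Your proposal is correct and follows exactly the paper's route: the paper likewise deduces this theorem from Theorem \ref{th:25} and the multiplicativity of $\sigma_3^-$, noting only that $p\equiv 9\pmod{10}$ forces $1+p^3\equiv 0\pmod{5}$. Your added checks (coprimality of $p$ and $pn+i$, oddness of $p$) just make explicit what the paper leaves implicit.
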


\subsection{$\ell=125$}

From the generating function \eqref{eq:gf}, we have
$$\sum_{n\ge 0}\overline{A}_{125} (n)q^n=\frac{(q^{125};q^{125})_\infty^2(q^2;q^2)_\infty}{(q;q)_\infty^2(q^{250};q^{250})_\infty}=\varphi(-q^{125})\sum_{n\ge 0}\overline{p}(n)q^n.$$
Extracting terms of the form $q^{125n}$ and replacing $q^{125}$ by $q$, we have
$$\sum_{n\ge 0}\overline{A}_{125} (125n)q^n=\varphi(-q)\sum_{n\ge 0}\overline{p}(125n)q^n.$$

According to \cite[Eq. (5.3)]{Che2015}, we know that
$\overline{p}(125(5n\pm 1))\equiv 0\pmod{5}$.
We also know from \cite[p. 49, Corollary (i)]{Ber1991} that
$$\varphi(-q)=\varphi(-q^{25})-2qM_1(-q^{5})+2q^4M_2(-q^{5}),$$
where $M_1(q)=f(q^3,q^7)$ and $M_2(q)=f(q,q^9)$. Here $f(a,b)$ is the Ramanujan's theta function defined as
$$f(a,b):=\sum_{n=-\infty}^{\infty} a^{n(n+1)/2}b^{n(n-1)/2}=(-a;ab)_\infty(-b;ab)_\infty(ab;ab)_\infty.$$
Now we extract terms involving $q^{5n}$ from $\sum_{n\ge 0}\overline{A}_{125} (125n)q^n$ and replace $q^5$ by $q$, then
$$\sum_{n\ge 0}\overline{A}_{125} (625n)q^n\equiv \varphi(-q^5)\sum_{n\ge 0}\overline{p}(625n)q^n\pmod{5}.$$
Finally, we use the following congruence from \cite[Theorem 1.5]{Che2015} for $\overline{p}(n)$
$$\overline{p}(25n)\equiv \overline{p}(625n)\pmod{5},$$
and obtain
$$\sum_{n\ge 0}\overline{A}_{125} (625n)q^n\equiv \varphi(-q^5)\sum_{n\ge 0}\overline{p}(25n)q^n\pmod{5},$$
which coincides with
$$\sum_{n\ge 0}\overline{A}_{125} (25n)q^n\equiv \varphi(-q^5)\sum_{n\ge 0}\overline{p}(25n)q^n\pmod{5}.$$
We therefore conclude

\begin{theorem}
For any nonnegative integer $n$, we have
\begin{equation}
\overline{A}_{125} (25n)\equiv \overline{A}_{125} (625n) \pmod{5}.
\end{equation}
\end{theorem}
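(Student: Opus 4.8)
The plan is to route the entire argument through the overpartition generating function and the theta function $\varphi$, and to obtain clean product expressions for both $\sum_n\overline{A}_{125}(25n)q^n$ and $\sum_n\overline{A}_{125}(625n)q^n$ before comparing them modulo $5$. Starting from \eqref{eq:gf} with $\ell=125$, I would first factor
$$\sum_{n\ge 0}\overline{A}_{125}(n)q^n=\frac{(q^{125};q^{125})_\infty^2}{(q^{250};q^{250})_\infty}\cdot\frac{(q^2;q^2)_\infty}{(q;q)_\infty^2}=\varphi(-q^{125})\sum_{n\ge 0}\overline{p}(n)q^n,$$
using the product formulas for $\varphi(-q)$ and $\overline{p}$ recorded above. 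The structural point I would lean on is that $\varphi(-q^{125})=\sum_m(-1)^mq^{125m^2}$ is supported only on exponents divisible by $125$, so it passes cleanly through the dissections to follow.

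Next I would extract the two progressions I need. Picking off the terms $q^{25n}$ and replacing $q^{25}$ by $q$ collapses $\varphi(-q^{125})$ to $\varphi(-q^5)$ and yields the \emph{exact} identity
$$\sum_{n\ge 0}\overline{A}_{125}(25n)q^n=\varphi(-q^5)\sum_{n\ge 0}\overline{p}(25n)q^n,$$
with no reduction required. For the other side, first extracting $q^{125n}$ and rescaling gives $\sum_n\overline{A}_{125}(125n)q^n=\varphi(-q)\sum_n\overline{p}(125n)q^n$, and a second extraction of the $q^{5n}$ terms then isolates $\sum_n\overline{A}_{125}(625n)q^n$.

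The heart of the argument, and the step I expect to be the main obstacle, is this second, genuinely modular, extraction. Here I would invoke the $5$-dissection $\varphi(-q)=\varphi(-q^{25})-2qM_1(-q^5)+2q^4M_2(-q^5)$, whose three pieces occupy the residue classes $0$, $1$, $4$ modulo $5$ (reflecting that squares are $\equiv 0,1,4\pmod 5$). Splitting $\sum_n\overline{p}(125n)q^n$ into its five residue classes mod $5$ and multiplying, a summand contributes to $q^{5n}$ precisely when the $\varphi$-residue and the $\overline{p}$-residue cancel mod $5$; the surviving cross-terms pair residue $1$ with residue $4$ and residue $4$ with residue $1$. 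Since $\overline{p}(125(5n\pm1))\equiv 0\pmod 5$ by \cite[Eq. (5.3)]{Che2015}, exactly those cross-terms vanish modulo $5$, leaving only the residue-$0$ times residue-$0$ product. Rescaling $q^5\mapsto q$ then gives
$$\sum_{n\ge 0}\overline{A}_{125}(625n)q^n\equiv\varphi(-q^5)\sum_{n\ge 0}\overline{p}(625n)q^n\pmod 5.$$
The care required in this bookkeeping, and the verification that no other residue pairing survives, is the delicate point.

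Finally I would close the loop with the overpartition congruence $\overline{p}(25n)\equiv\overline{p}(625n)\pmod 5$ of \cite[Theorem 1.5]{Che2015}, which rewrites the right-hand side above as $\varphi(-q^5)\sum_n\overline{p}(25n)q^n$—precisely the generating function for $\overline{A}_{125}(25n)$ established earlier. Comparing coefficients yields $\overline{A}_{125}(25n)\equiv\overline{A}_{125}(625n)\pmod 5$, as claimed.
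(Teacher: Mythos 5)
Your proposal is correct and follows essentially the same route as the paper: factor the generating function as $\varphi(-q^{125})\sum_{n\ge 0}\overline{p}(n)q^n$, perform the two extractions (the second via the $5$-dissection of $\varphi(-q)$ together with $\overline{p}(125(5n\pm 1))\equiv 0\pmod 5$), and close with $\overline{p}(25n)\equiv\overline{p}(625n)\pmod 5$. The only difference is presentational: you spell out the residue-class bookkeeping in the $q^{5n}$ extraction more explicitly than the paper does, which is a useful clarification but not a new idea.
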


\subsection{$\ell=5^\alpha$ with $\alpha\ge 4$}

We know from \eqref{eq:gf} that in this case
$$\sum_{n\ge 0}\overline{A}_{5^\alpha} (n)q^n=\varphi\left(-q^{5^\alpha}\right)\sum_{n\ge 0}\overline{p}(n)q^n.$$
Note that for $\alpha\ge 4$, $\varphi\left(-q^{5^\alpha}\right)$ is a function of $q^{625}$. Hence $\overline{A}_{5^\alpha} (625n+ 125)$ (resp. $\overline{A}_{5^\alpha} (625n+ 500)$) is a linear combination of values of $\overline{p}(625n+ 125)$ (resp. $\overline{p}(625n+ 500)$). Thanks to \cite[Eq. (5.3)]{Che2015}, we know that $\overline{p}(625n+ 125)\equiv 0\pmod{5}$ and $\overline{p}(625n+ 500)\equiv 0\pmod{5}$ hold for all $n\ge 0$. Hence

\begin{theorem}\label{th:alpha4.1}
For any nonnegative integer $n$ and positive integer $\alpha\ge 4$, we have
\begin{equation}
\overline{A}_{5^\alpha}(625n+ i)\equiv 0\pmod{5},
\end{equation}
where $i=125$ and $500$.
\end{theorem}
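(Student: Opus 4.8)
The plan is to leverage the product decomposition
$$\sum_{n\ge 0}\overline{A}_{5^\alpha}(n)q^n=\varphi\left(-q^{5^\alpha}\right)\sum_{n\ge 0}\overline{p}(n)q^n$$
recorded just above, together with the observation that for $\alpha\ge 4$ every exponent occurring in the theta factor is a multiple of $625$. The entire statement will then collapse to the known vanishing of $\overline{p}$ on two residue classes modulo $625$, so no deep new input is required; the work is purely in tracking which arguments of $\overline{p}$ can appear.

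First I would expand the theta factor explicitly. Since $\varphi(-q)=\sum_{m=-\infty}^{\infty}(-1)^{m^2}q^{m^2}=\sum_{m=-\infty}^{\infty}(-1)^{m}q^{m^2}$, substituting $q^{5^\alpha}$ gives
$$\varphi\left(-q^{5^\alpha}\right)=\sum_{m=-\infty}^{\infty}(-1)^{m}q^{5^\alpha m^2}.$$
For $\alpha\ge 4$ we have $5^\alpha=625\cdot 5^{\alpha-4}$, so each exponent $5^\alpha m^2$ is divisible by $625$. I would then extract the coefficient of $q^{625n+i}$ from both sides of the product for $i\in\{125,500\}$. On the left this is $\overline{A}_{5^\alpha}(625n+i)$, and on the right, after multiplying the two series, it becomes
$$\overline{A}_{5^\alpha}(625n+i)=\sum_{m}(-1)^{m}\,\overline{p}\bigl(625n+i-5^\alpha m^2\bigr),$$
where $m$ runs over the finitely many integers keeping the argument nonnegative. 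Because $5^\alpha m^2$ is a multiple of $625$, each surviving argument satisfies $625n+i-5^\alpha m^2\equiv i\pmod{625}$; that is, every term is of the shape $\overline{p}(625n'+i)$ with $n'\ge 0$. This residue-preservation is the conceptual heart of the argument and essentially the only point needing care.

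Finally I would invoke the congruences of Chen et al.\ \cite[Eq.~(5.3)]{Che2015}, namely $\overline{p}(625n'+125)\equiv 0\pmod 5$ and $\overline{p}(625n'+500)\equiv 0\pmod 5$ for all $n'\ge 0$. For $i\in\{125,500\}$ every summand $\overline{p}(625n'+i)$ in the displayed linear combination vanishes modulo $5$, whence $\overline{A}_{5^\alpha}(625n+i)\equiv 0\pmod 5$. I do not expect a genuine obstacle here: once one notices that the theta exponents are all divisible by $625$ and hence cannot shift the residue $i$ modulo $625$, the theorem reduces immediately to the cited vanishing of $\overline{p}$ on these two progressions.
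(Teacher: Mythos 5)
Your argument is correct and is essentially the paper's own proof, just written out more explicitly: the paper likewise observes that $\varphi\left(-q^{5^\alpha}\right)$ is a function of $q^{625}$ for $\alpha\ge 4$, so that $\overline{A}_{5^\alpha}(625n+i)$ is a linear combination of values $\overline{p}(625n'+i)$, and then invokes the same congruences of Chen et al. No differences worth noting.
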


Note also that for $\alpha\ge 2$, extracting terms of the form $q^{25n}$ and replacing $q^{25}$ by $q$, we obtain
$$\sum_{n\ge 0}\overline{A}_{5^\alpha} (25n)q^n=\varphi\left(-q^{5^{\alpha-2}}\right)\sum_{n\ge 0}\overline{p}(25n)q^n.$$
On the other hand, we extract terms involving $q^{625n}$ from $\sum_{n\ge 0}\overline{A}_{5^{\alpha+2}} (n)q^n$ and replace $q^{625}$ by $q$, then
$$\sum_{n\ge 0}\overline{A}_{5^{\alpha+2}} (625n)q^n=\varphi\left(-q^{5^{\alpha-2}}\right)\sum_{n\ge 0}\overline{p}(625n)q^n.$$
Thanks again to \cite[Theorem 1.5]{Che2015}, which states that $\overline{p}(25n)\equiv \overline{p}(625n)\pmod{5}$ for all $n\ge 0$, we conclude

\begin{theorem}\label{th:alpha4.2}
For any nonnegative integer $n$, we have
\begin{equation}
\overline{A}_{5^\alpha}(25n)\equiv \overline{A}_{5^{\alpha+2}} (625n)\pmod{5}
\end{equation}
for all $\alpha\ge 2$.
\end{theorem}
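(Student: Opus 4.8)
The plan is to reduce the stated congruence to the single nontrivial input $\overline{p}(25n)\equiv\overline{p}(625n)\pmod 5$ of Chen et al.\ \cite[Theorem 1.5]{Che2015}, by showing that both sides of the claimed relation are governed by the \emph{same} generating function built from a common theta factor. The starting point is the factorization already recorded in this subsection, namely
$$\sum_{n\ge 0}\overline{A}_{5^\alpha}(n)q^n=\varphi\bigl(-q^{5^\alpha}\bigr)\sum_{n\ge 0}\overline{p}(n)q^n,$$
which follows from \eqref{eq:gf} upon recognizing $\frac{(q^{5^\alpha};q^{5^\alpha})_\infty^2}{(q^{2\cdot 5^\alpha};q^{2\cdot 5^\alpha})_\infty}=\varphi(-q^{5^\alpha})$ together with $\frac{(q^2;q^2)_\infty}{(q;q)_\infty^2}=\sum_{n\ge0}\overline p(n)q^n$.

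First I would perform a dissection on the left-hand series. Since $\alpha\ge 2$, the exponent $5^\alpha$ is divisible by $25$, so $\varphi(-q^{5^\alpha})$ is a power series in $q^{25}$. Extracting the terms $q^{25n}$ from the product above and replacing $q^{25}$ by $q$ therefore lets the theta factor pass through cleanly, yielding
$$\sum_{n\ge 0}\overline{A}_{5^\alpha}(25n)q^n=\varphi\bigl(-q^{5^{\alpha-2}}\bigr)\sum_{n\ge 0}\overline{p}(25n)q^n.$$
Next I would carry out the analogous dissection for $\overline{A}_{5^{\alpha+2}}$. Here $5^{\alpha+2}$ is divisible by $625$ (as $\alpha+2\ge 4$), so $\varphi(-q^{5^{\alpha+2}})$ is a power series in $q^{625}$; extracting the terms $q^{625n}$ and replacing $q^{625}$ by $q$ gives
$$\sum_{n\ge 0}\overline{A}_{5^{\alpha+2}}(625n)q^n=\varphi\bigl(-q^{5^{\alpha-2}}\bigr)\sum_{n\ge 0}\overline{p}(625n)q^n.$$
The crucial observation is that both dissected series carry the identical theta factor $\varphi(-q^{5^{\alpha-2}})$, because $5^\alpha/25=5^{\alpha-2}=5^{\alpha+2}/625$.

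Finally, invoking $\overline{p}(25n)\equiv\overline{p}(625n)\pmod 5$ shows that the two overpartition series on the right agree modulo $5$, hence so do their products with the common factor $\varphi(-q^{5^{\alpha-2}})$; comparing coefficients of $q^n$ delivers $\overline{A}_{5^\alpha}(25n)\equiv\overline{A}_{5^{\alpha+2}}(625n)\pmod 5$. The only point demanding genuine care is the bookkeeping of the two dissections: one must verify that in each case the theta factor lives entirely on the arithmetic progression being extracted, so that it factors out rather than mixing with neighboring residue classes. This is exactly what the divisibilities $25\mid 5^\alpha$ and $625\mid 5^{\alpha+2}$ guarantee, and no analytic input beyond the Chen et al.\ congruence is required.
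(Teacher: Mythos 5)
Your proposal is correct and follows essentially the same route as the paper: both dissect the factorization $\sum_{n\ge 0}\overline{A}_{5^\alpha}(n)q^n=\varphi(-q^{5^\alpha})\sum_{n\ge 0}\overline{p}(n)q^n$ along $q^{25n}$ and $q^{625n}$ respectively, observe that the resulting theta factor $\varphi(-q^{5^{\alpha-2}})$ is common to both, and conclude via $\overline{p}(25n)\equiv\overline{p}(625n)\pmod{5}$ from Chen et al. Your extra remark justifying why the theta factor passes through the dissection is a correct and welcome elaboration of a step the paper leaves implicit.
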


It follows from Theorems \ref{th:alpha4.1} and \ref{th:alpha4.2} that

\begin{theorem}\label{th:alpha4.3}
For any nonnegative integer $n$ and positive integer $\alpha\ge 4$, we have
\begin{equation}
\overline{A}_{5^\alpha}(5^{2j}(625n+i))\equiv 0\pmod{5}
\end{equation}
for all integers $0\le j\le (\alpha-4)/2$, where $i=125$ and $500$.
\end{theorem}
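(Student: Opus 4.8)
The plan is to combine Theorems~\ref{th:alpha4.1} and~\ref{th:alpha4.2} inductively on $j$. The base case $j=0$ is immediate: Theorem~\ref{th:alpha4.1} already asserts $\overline{A}_{5^\alpha}(625n+i)\equiv 0\pmod 5$ for $i\in\{125,500\}$ and every $\alpha\ge 4$, which is exactly the claim when $j=0$. So the work lies entirely in propagating this vanishing upward through the factors of $5^{2}=25$.

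For the inductive step, I would read Theorem~\ref{th:alpha4.2} in the contrapositive direction. That theorem says $\overline{A}_{5^\alpha}(25m)\equiv \overline{A}_{5^{\alpha+2}}(625m)\pmod 5$ for all $\alpha\ge 2$ and all $m\ge 0$. Replacing $\alpha$ by $\alpha-2$ (legitimate since we are in the regime $\alpha\ge 4$, so $\alpha-2\ge 2$), this reads
\begin{equation*}
\overline{A}_{5^{\alpha}}(625m)\equiv \overline{A}_{5^{\alpha-2}}(25m)\pmod 5.
\end{equation*}
The key observation is that $625m = 5^{2}\cdot(25m)$, so applying this relation lets me strip off one factor of $5^{2}$ from the argument at the cost of lowering the subscript from $5^\alpha$ to $5^{\alpha-2}$. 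Concretely, to treat the index $j$ I would start from the quantity $\overline{A}_{5^\alpha}\bigl(5^{2j}(625n+i)\bigr)$, write $5^{2j}(625n+i)=625\cdot\bigl(5^{2(j-1)}(625n+i)\bigr)$, and invoke the displayed relation with $m=5^{2(j-1)}(625n+i)$ to obtain
\begin{equation*}
\overline{A}_{5^\alpha}\bigl(5^{2j}(625n+i)\bigr)\equiv \overline{A}_{5^{\alpha-2}}\bigl(5^{2(j-1)}(625n+i)\bigr)\pmod 5.
\end{equation*}
Iterating this $j$ times peels off all the extra factors and decrements the subscript by $2$ each time, arriving at $\overline{A}_{5^{\alpha-2j}}(625n+i)$.

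The single point requiring care, and the place I expect the bookkeeping to matter most, is the range constraint. Each application of Theorem~\ref{th:alpha4.2} requires the subscript exponent to be at least $2$; after $j$ steps the exponent is $\alpha-2j$, and to land cleanly on Theorem~\ref{th:alpha4.1} at the end I need $\alpha-2j\ge 4$, i.e. $j\le(\alpha-4)/2$. This is precisely the hypothesis on $j$ in the statement, so the constraint is exactly what is available. After the $j$ iterations the expression equals $\overline{A}_{5^{\alpha-2j}}(625n+i)$ with $\alpha-2j\ge 4$, and a final appeal to Theorem~\ref{th:alpha4.1} (with $\alpha$ there taken to be $\alpha-2j$) gives the congruence to $0$ modulo $5$. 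I would present this either as a clean finite induction on $j$ or, equivalently and perhaps more transparently, as a single chain of $j$ congruences followed by the terminal application of Theorem~\ref{th:alpha4.1}; there are no analytic obstacles, only the verification that the exponent stays in the admissible range throughout.
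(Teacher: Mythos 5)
Your argument is correct and is exactly the deduction the paper intends: the paper simply states that Theorem~\ref{th:alpha4.3} ``follows from Theorems~\ref{th:alpha4.1} and~\ref{th:alpha4.2}'', and your iteration of Theorem~\ref{th:alpha4.2} (with $\alpha$ shifted down by $2$ at each step) followed by a terminal application of Theorem~\ref{th:alpha4.1} is the intended chain, with the range bookkeeping $\alpha-2j\ge 4$ handled correctly.
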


\subsection*{Acknowledgements}

The author would like to express gratitude to James A. Sellers
for some interesting discussions as well as thank the reviewer for several helpful
comments.

\bibliographystyle{amsplain}

\end{document}